\renewcommand{\phi}{\varphi}
\renewcommand{\epsilon}{\varepsilon}
\renewcommand{\theta}{\vartheta}
\def\AAA{{\mathbf A}}
\def\cO{\mathcal{O}}
\def\fra{\mathfrak{a}}
\def\frb{\mathfrak{b}}
\def\frm{\mathfrak{m}}
 \DeclareMathOperator{\lct}{lct}
 \DeclareMathOperator{\ord}{ord}
\newcommand{\llbracket}{[\negthinspace[}
\newcommand{\rrbracket}{]\negthinspace]}
\newtheorem{lemma}{Lemma}[section]
\newtheorem{theorem}[lemma]{Theorem}
\newtheorem{corollary}[lemma]{Corollary}
\newtheorem{proposition}[lemma]{Proposition}
\theoremstyle{definition}
\newtheorem{remark}[lemma]{Remark}
\theoremstyle{remark}
\newtheorem*{remark*}{Remark}
\newtheorem*{note*}{Note}
\begin{document}

\title{Log canonical thresholds on smooth varieties: the Ascending Chain Condition}

\author[L. Ein]{Lawrence~Ein}
\address{Department of Mathematics, University of
Illinois at Chicago, 851 South Morgan Street (M/C 249),
Chicago, IL 60607-7045, USA} 
\email{{\tt ein@math.uic.edu}}

\author[M. Musta\c{t}\u{a}]{Mircea~Musta\c{t}\u{a}}
\address{Department of Mathematics, University of Michigan, 530 Church Street, 
Ann Arbor, MI 48109, USA}
\email{{\tt mmustata@umich.edu}}

\begin{abstract}
Building on results of Koll\'{a}r, we prove Shokurov's ACC Conjecture
for log canonical thresholds on smooth varieties, and more generally,
on varieties with quotient singularities.
\end{abstract}

\thanks{2000\,\emph{Mathematics Subject Classification}.
 Primary 14E15; Secondary 14B05.
\newline The first author
  was partially supported by NSF grant DMS-0700774, and the second author was partially supported by
  NSF grant DMS-0758454, and
  by a Packard Fellowship}
\keywords{Log canonical threshold, log resolution}

\maketitle

\markboth{L.~Ein and M.~Musta\c t\u a}{Log canonical thresholds on smooth varieties}

\section{Introduction}

Let $X$ be a smooth variety over an algebraically closed field $k$, of characteristic zero.
If $f\in\cO(X)$ is a nonzero regular function on $X$, and $p\in X$ is a point
such that $f(p)=0$, then the log canonical threshold $\lct_p(f)$ is an invariant of the singularity at $p$ of
the hypersurface defined by $f$. This invariant plays a fundamental role in birational geometry. 
For an overview of the many contexts in which this invariant appears, and for some applications, see
for example \cite{EM}. The following is our main result.

\begin{theorem}\label{main}
For every $n\geq 1$, the set 
$${\mathcal T}_n:=\{\lct_p(f)\mid f\in\cO(X),\,f(p)=0, \,X\,\text{smooth},\,\dim(X)=n\}$$
satisfies the Ascending Chain Condition, that is, it contains no infinite strictly increasing 
sequences.
\end{theorem}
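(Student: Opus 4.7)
My plan is to argue by contradiction: assume $c_i := \lct_{p_i}(f_i)$ is a strictly increasing sequence in $\cT_n$ converging to some $c$. Since $\lct_p(f)$ depends only on the germ of $f$ at $p$, I may replace each $(X, p_i)$ by the formal neighborhood $\Spec k[[x_1,\dots,x_n]]$ and regard each $f_i$ as a power series in $k[[x_1,\dots,x_n]]$ vanishing at the origin. The problem then lives in a single ambient regular local ring, with only the coefficients of the $f_i$ varying.

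\textbf{Generic limit.} The heart of the argument is to produce a single ``generic limit'' $f_\infty$ of the sequence. Via an ultraproduct along a non-principal ultrafilter on $\NN$ (or an equivalent generic-limit construction), one obtains an algebraically closed extension $K \supseteq k$ of characteristic zero and a distinguished power series $f_\infty \in K[[x_1,\dots,x_n]]$ with the property that for every $N$, the $N$-th jet of $f_\infty$ coincides with the $N$-th jet of $f_i$ for a set $U_N$ of indices belonging to the ultrafilter --- in particular an infinite set. Since log canonical thresholds are preserved under algebraically closed base field extensions in characteristic zero, the quantity $\lct_0(f_\infty)$ is directly comparable with the $c_i$.

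\textbf{Koll\'ar's truncation and contradiction.} The essential input from Koll\'ar is a uniform truncation bound: for fixed $n$ and fixed $C$, there exists an integer $N_0 = N_0(n, C)$ such that the quantity $\min(\lct_0(g), C)$ depends only on the $N_0$-th jet of $g$. Applying this with $C = c$ (and noting that each $c_i < c$, so $\min(\lct_0(f_i), c) = c_i$), the jet matching on $U_{N_0}$ forces $\min(\lct_0(f_\infty), c) = \min(\lct_0(f_i), c) = c_i$ for every $i \in U_{N_0}$. Since the left-hand side is a single number while the right-hand side takes infinitely many pairwise distinct values as $i$ ranges over the infinite set $U_{N_0}$, we reach a contradiction.

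\textbf{Main obstacle.} The crux of the argument --- and the main technical obstacle --- is Koll\'ar's uniform truncation statement itself, whose proof rests on the boundedness of log resolutions for hypersurfaces of bounded lct and on MMP-type techniques (``building on results of Koll\'ar,'' as the abstract says). A secondary technical point is to ensure that the generic limit construction respects log canonical thresholds and jet truncations, so that Koll\'ar's constant $N_0$ applies uniformly to both $f_\infty$ and the approximating $f_i$. The extension of the theorem to varieties with quotient singularities proceeds by passing to a local smooth cover and tracking a finite group action, without introducing fundamentally new conceptual difficulties.
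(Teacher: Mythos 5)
There is a genuine gap, and it is located exactly at what you call the ``essential input from Koll\'ar.'' The uniform truncation statement you invoke --- that for fixed $n$ and $C$ there is an $N_0(n,C)$ such that $\min(\lct_0(g),C)$ depends only on the $N_0$-th jet of $g$ --- is not Koll\'ar's result, and it is in fact false. In dimension $2$ take $g=x^2$ and $g'=x^2+y^m$ with $m>N_0$: they have the same $N_0$-th jet, yet $\lct_0(g)=1/2$ while $\lct_0(g')=1/2+1/m$, and both values are below any $C>1/2$. A symptom of the problem is that your final contradiction never uses that the sequence $(c_i)$ is \emph{increasing}; the same argument would ``rule out'' the perfectly legitimate strictly decreasing sequence $\lct_0(x^2+y^m)=1/2+1/m$, so the argument proves too much. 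What Koll\'ar actually proves (Prop.~40 of \cite{Kol1}, quoted as Proposition~\ref{ingred} in the paper) is conditional and much weaker: \emph{if} $\lct_0(F)$ is computed by a divisor with center at the closed point, then there exist an $m$ and a Zariski-open subset $U_m$ of the closure $Z_m$ of $t_m(F)$ such that any $G$ with $t_m(G)\in U_m$ has the same log canonical threshold. This gives no jet-determinacy bound depending only on $(n,C)$, and it says nothing when the threshold of the generic limit is computed only by divisors with non-closed-point centers.

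The missing content is precisely the paper's main contribution: a reduction to the case where Koll\'ar's conditional statement applies. Given the generic limit $f$ of the sequence (your ultraproduct construction of $f_\infty$ is fine and parallels \cite{dFM}/\cite{Kol1}), one must handle the case where no divisor computing $\lct_0(f)$ has center at the closed point. The paper does this by first showing (Proposition~\ref{key}) that $\lct_0(f\cdot\frm^q)=\lct_0(f)$ for a suitable $q>0$ with the new threshold computed by a divisor centered at the closed point, then replacing $\frm^q$ by $g^{q/N}$ for a product $g$ of very general linear forms via a truncation/Bertini argument (Lemma~\ref{lem1}, Proposition~\ref{prop2}), passing to $f^rg^s$ to clear denominators, and checking via Lemma~\ref{easy1} that $f^rg^s$ is a generic limit of $(f_q^rg^s)_q$. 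Only then does Proposition~\ref{ingred} produce some index $q$ with $\lct_0(f_q^rg^s)=c/r$, which contradicts $\lct_0(f_q^rg^s)\leq c_q/r<c/r$ --- and here the strict increase of $(c_q)$ is used in an essential way. Without this reduction (or some substitute for it), your outline does not yield a proof.
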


The above statement has been conjectured by Shokurov in \cite{Sho}. In fact, Shokurov's conjecture
is in a more general setting, allowing the ambient variety to have mild singularities.
The interest in this conjecture comes from the fact that its general singular version 
is related to the Termination of Flips Conjecture (see \cite{Birkar} for a statement in this direction).
While the above theorem does not have such strong consequences, we believe that it offers
strong evidence for the general case of the conjecture. Moreover, it suggests that this general case might not be out of reach.

As we will see in Proposition~\ref{reduction_quotient} below, every log canonical threshold on a 
variety with quotient singularities can be also written as a log canonical threshold on a smooth variety
of the same dimension. Therefore the above theorem also implies Shokurov's Conjecture
for log canonical thresholds on varieties with quotient singularities.

The first unconditional results on limit points of log canonical thresholds in arbitrary dimension 
have been proved in \cite{dFM}, using ultrafilter constructions. The key point was to show that given
a sequence of polynomials $f_m\in k[x_1,\ldots,x_n]$ such that $\lim_{m\to\infty}\lct_0(f_m)=c$,
one can construct a formal power series $f\in K\llbracket x_1,\ldots,x_n\rrbracket$ (for a suitable
field extension $K$ of $k$) such that $\lct_0(f)=c$ (one can define log canonical thresholds also for formal power series, and it seems that for questions involving limit points of log canonical thresholds,
this is the right framework).

The results in \emph{loc. cit}. have been reproved by Koll\'{a}r in \cite{Kol1}, replacing the
ultrafilter construction by a purely algebro-geometric one. 
His approach turned out to be better suited for ruling out increasing sequences of
log canonical thresholds. In fact, using a deep recent result in the Minimal Model Program 
from \cite{BCHM}, he showed that if (with the notation in the previous paragraph) the formal power series $f$ is such that its log canonical threshold
is computed by a divisor with center at the origin, then $\lct_0(f_m)\geq c$ for all $m$ large enough.
In this note we show that elementary arguments on log canonical thresholds allow us to reduce to this 
special case, and therefore prove Theorem~\ref{main} above.

\subsection{Acknowledgment}
We are grateful to Shihoko Ishii and Angelo Vistoli for useful discussions and correspondence, 
and to J\'{a}nos Koll\'{a}r for his comments and suggestions on a preliminary version of this note.

\section{Getting isolated log canonical centers}

Let $k$ be an algebraically closed field of characteristic zero. Fix $n\geq 1$, and let
$R=k\llbracket x_1,\dots,x_n\rrbracket$. We put $X={\rm Spec}(R)$.
Suppose that $f\in R$ is a nonzero element 
in the maximal ideal $\frm=(x_1,\ldots,x_n)$. 
Our goal in this section is to show that after possibly replacing $f$ by a suitable $f^r$, we can find a 
polynomial
$g\in k[x_1,\ldots,x_n]$
such that $\lct_0(f)=\lct_0(fg)$ and such that the log canonical threshold of
$fg$ can be computed by a divisor $E$ over $X$ with center at the closed point. 
Our main reference for log canonical thresholds in the formal power series setting
is \cite{dFM} (see also \cite{Kol2}, \S 8 for the basic facts in the more familiar setting
of varieties over $k$).

Let us fix a log resolution $\pi\colon Y\to X$ for the pair $(R, f\cdot\frm)$. 
Note that by the main result in  \cite{Temkin} such resolutions exist, and this is what allowed in \cite{dFM} the
extension of the usual results on log canonical thresholds to our setting.
We write
$f\cdot\cO_Y=\cO(-\sum_ia_iE_i)$, $\frm\cdot\cO_Y=\cO_Y(-\sum_ib_iE_i)$, and 
$K_{Y/X}=\cO_Y(-\sum_ik_iE_i)$.
Note that the center of $E_i$ on $X$ is the closed point (that we denote by $0$) if and only if $b_i>0$. 

The log canonical threshold of $f$ is given by
$$\lct_0(f)=\min_i\frac{k_i+1}{a_i}.$$
We say that a divisor $E_j$ \emph{computes the log canonical threshold}
$\lct_0(f)$ if $\lct_0(f)=\frac{k_j+1}{a_j}$. 
Let $I$ denote the set of those $j$ for which  $E_j$ has center equal to
 the closed point.
If there is $j\in I$ computing $\lct_0(f)$, then there is nothing we need to do.
We now assume that this is not the case. The following is our key observation.

\begin{proposition}\label{key}
With the above notation, suppose that $c:=\lct_0(f)<\frac{k_i+1}{a_i}$ for every $i\in I$.
If 
\begin{equation}\label{eq_key}
q:=\min\left\{\frac{1}{b_i}\left(\frac{k_i+1}{c}-a_i\right)\mid i\in I\right\},
\end{equation}
then $\lct_0(f\cdot\frm^q)=\lct_0(f)$, and there is a divisor $E$ over $X$
computing $\lct_0(f\cdot\frm^q)$,
with center equal to the closed point.
\end{proposition}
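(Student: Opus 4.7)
The plan is a direct computation on the fixed log resolution $\pi\colon Y\to X$, which serves as a log resolution of every pair $(R,f\cdot\frm^t)$ for $t\in\QQ_{\geq 0}$ since the support of the pulled-back divisor does not depend on $t$. By the standard formula for log canonical thresholds of $\QQ$-combinations of ideals, extended to the formal setting in \cite{dFM}, I would use
\[
\lct_0(f\cdot\frm^q)=\min_i\frac{k_i+1}{a_i+qb_i}.
\]
Before applying it I would record two basic sanity checks: the set $I$ is nonempty because $\frm$ has support at the closed point, and $q>0$ because for every $i\in I$ one has $b_i>0$, while the standing hypothesis $c<\frac{k_i+1}{a_i}$ forces $\frac{k_i+1}{c}-a_i>0$, so each term inside the minimum in (\ref{eq_key}) is strictly positive.

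Next I would split the minimum above according to whether $i\in I$ or not. For $i\notin I$ we have $b_i=0$, so the quotient reduces to $\frac{k_i+1}{a_i}$; since by hypothesis no $i\in I$ computes $c$, the minimum $c=\lct_0(f)$ must be attained at some $i\notin I$, so $\min_{i\notin I}\frac{k_i+1}{a_i}=c$. For $i\in I$, the defining inequality $q\leq\frac{1}{b_i}\bigl(\frac{k_i+1}{c}-a_i\bigr)$ rearranges (since $b_i>0$) into $\frac{k_i+1}{a_i+qb_i}\geq c$, with equality precisely when $i$ attains the minimum in (\ref{eq_key}). Call such an index $i_0\in I$. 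Combining the two cases,
\[
\lct_0(f\cdot\frm^q)=\min_i\frac{k_i+1}{a_i+qb_i}=c=\lct_0(f),
\]
and the divisor $E_{i_0}$ computes this threshold; since $i_0\in I$, its center on $X$ is the closed point, as required.

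The only real subtlety is justifying the displayed lct-formula for a possibly irrational (in any case non-integer) $q$ in the formal power series setting, which is exactly the point at which one invokes the framework of \cite{dFM}. Once this is granted, the whole argument is an algebraic rearrangement: the rational number $q$ has been engineered so as to \emph{raise} each ratio $\frac{k_i+1}{a_i+qb_i}$ for $i\in I$ up to the common floor $c$ (with equality at the minimizer), while leaving the ratios for $i\notin I$ untouched, so that $c$ continues to be the overall minimum but is now also realized by a divisor whose center is the closed point.
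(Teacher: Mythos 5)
Your argument is correct and is essentially the paper's own proof: both compute $\lct_0(f\cdot\frm^q)=\min_i\frac{k_i+1}{a_i+qb_i}$ on the fixed log resolution, split into $i\notin I$ (where $b_i=0$) and $i\in I$ (where the choice of $q$ gives $\frac{k_i+1}{a_i+qb_i}\geq c$ with equality at the minimizer), and conclude that the minimizing $E_{i_0}$ with $i_0\in I$ computes the threshold. The only trivial remark is that $q$ is in fact rational (as $c$, the $a_i$, $b_i$, $k_i$ all are), so no extra care beyond the usual real-exponent formalism of \cite{dFM} is needed.
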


\begin{proof}
Note that our assumption implies $q>0$.
By construction $\pi$ is also a log resolution of $(X,f\cdot\frm^q)$. Using the above notation for
$f\cdot\cO_Y$, $\frm\cdot\cO_Y$, and $K_{Y/X}$, we see that
$$\lct_0(f\cdot\frm^q)=\min_i\left\{\frac{k_i+1}{a_i+qb_i}\right\}.$$
Note that if $i\not\in I$, then $b_i=0$ and $\frac{k_i+1}{a_i+qb_i}=\frac{k_i+1}{a_i}\geq c$.

On the other hand, if $i\in I$ then by the definition of $q$ we have
\begin{equation}\label{eq_key2}
\frac{k_i+1}{a_i+qb_i}\geq c.
\end{equation}
This shows that $\lct_0(f\cdot\frm^q)\geq c$.
Moreover, if $i\in I$ is such that the minimum in (\ref{eq_key}) is achieved, then
we have equality in (\ref{eq_key2}). This shows that $\lct_0(f\cdot\frm^q)=\lct_0(f)$, and there is a divisor
$E_i$ with center equal to the closed point, that computes $\lct_0(f\cdot\frm^q)$.
\end{proof}

Our next goal is to replace the ideal $\frm^q$ by the suitable power of a product
of very general linear forms. Suppose that $\fra$ is an ideal in $k[x_1,\ldots,x_n]$
generated by $h_1,\ldots,h_d$, and such that
$\lct_0(\fra)=c$. It is well-known that if  $N\geq c$ and 
$g_1,\ldots, g_N$ are general linear combinations of $h_1,\ldots,h_d$ with coefficients in $k$, 
then $\lct_0(g)=c/N$, where $g=g_1\cdot\ldots\cdot g_N$ (see, for example, Prop. 9.2.26 
in \cite{positivity} for a related statement). 
The proof of this result, however, relies on Bertini's Theorem, so it does not simply carry over
to our setting. When dealing with formal power series, we will argue by taking truncations.
Let us start with a small variation on the above-mentioned result.

\begin{lemma}\label{lem1}
Let $\fra$ and $\frb$ be nonzero ideals in the polynomial ring $k[x_1,\ldots,x_n]$, and
$\alpha$, $\beta>0$ such that $\lct_0(\fra^{\alpha}\cdot\frb^{\beta})=c_0$. Let $N$
be a positive integer such that $N/\beta\geq c_0$. If $h_1,\ldots,h_d$ generate $\frb$,
and if $g_1,\ldots,g_N$ are general linear combinations of the $h_i$ with coefficients in $k$, then
$\lct_0(\fra^{\alpha}\cdot g^{\beta/N})=c_0$, where $g=g_1\cdot\ldots\cdot g_N$.
\end{lemma}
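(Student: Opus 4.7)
The plan is to extend the given log resolution of $(X,\fra\cdot\frb)$ by the strict transforms of the new hypersurfaces via a Bertini-type argument, and then to read off the log canonical threshold directly. First I would fix a log resolution $\pi\colon Y\to X$ of $(X,\fra\cdot\frb)$, where $X=\Spec k[x_1,\ldots,x_n]$, and write $\fra\cdot\cO_Y=\cO_Y(-A)$, $\frb\cdot\cO_Y=\cO_Y(-B)$, $K_{Y/X}=\sum_i k_i E_i$, with $A=\sum_i a_i E_i$ and $B=\sum_i b_i E_i$. By hypothesis on $c_0$,
$$c_0=\min_i\frac{k_i+1}{\alpha a_i+\beta b_i},$$
with the minimum taken over those $i$ for which the center of $E_i$ on $X$ contains $0$.

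Next I would apply Bertini. Since $\pi^*h_1,\ldots,\pi^*h_d$ generate $\cO_Y(-B)$ as a subsheaf of $\cO_Y$, the invertible sheaf $\cO_Y(-B)$ is globally generated by these sections; hence for a general $k$-linear combination $g_j=\sum_\ell c_{j\ell}h_\ell$, the zero locus $\widetilde{D}_j$ of $\pi^*g_j$ viewed as a section of $\cO_Y(-B)$ is a smooth prime divisor on $Y$, and $g_j\cdot\cO_Y=\cO_Y(-B-\widetilde{D}_j)$. Choosing $g_1,\ldots,g_N$ successively general, I can further arrange that $\widetilde{D}_1,\ldots,\widetilde{D}_N$ together with the exceptional divisor of $\pi$ form a simple normal crossings divisor. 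In particular, $\pi$ becomes a log resolution of $(X,\fra^\alpha\cdot g^{\beta/N})$, with
$$\fra^\alpha\cdot g^{\beta/N}\cdot\cO_Y=\cO_Y\!\left(-\alpha A-\beta B-\frac{\beta}{N}\sum_{j=1}^N\widetilde{D}_j\right).$$

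Then I would read off the log canonical threshold from this enlarged resolution. The contribution from each $E_i$ is unchanged and equals $(k_i+1)/(\alpha a_i+\beta b_i)$, whose minimum (over centers containing $0$) is $c_0$ by construction. Each $\widetilde{D}_j$ is non-exceptional, hence contributes discrepancy $0$, giving the ratio $1/(\beta/N)=N/\beta\geq c_0$. Taking the overall minimum yields $\lct_0(\fra^\alpha\cdot g^{\beta/N})=\min(c_0,N/\beta)=c_0$, as required.

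The main obstacle is the Bertini step: I must ensure that the strict transforms $\widetilde{D}_j$ are not merely individually smooth, but jointly in simple normal crossings position with each other and with the fixed exceptional divisor of $\pi$. This is standard over an algebraically closed field of characteristic zero on a quasi-projective variety, and is handled either by iterating Bertini (after each $\widetilde{D}_j$ one still has a base-point-free sub-system of $|{-B}|$ on a smooth variety with enlarged SNC divisor) or by applying Bertini once to a product parameter space; it is the sole nontrivial input.
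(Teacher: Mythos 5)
Your proof is correct and follows essentially the same route as the paper's: fix a log resolution of $\fra\cdot\frb$, use Bertini to see that the pullback of each general combination $g_j$ decomposes as the fixed part $B$ plus a residual divisor in simple normal crossings position with the exceptional locus, and read off the threshold, using $N/\beta\geq c_0$ to conclude the new divisors contribute nothing smaller than $c_0$. The only cosmetic difference is your claim that each residual divisor is a smooth \emph{prime} divisor; irreducibility is neither needed nor asserted in the paper, which only requires that the residual divisors share no components with each other or with the $E_i$ and that the total configuration is SNC.
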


\begin{proof}
The argument is the same one as in \emph{loc. cit}., but we recall it for completeness. Let $\mu\colon W\to \AAA^n$ be a log resolution of 
$\fra\cdot\frb$. Let us write $\fra\cdot\cO_W=\cO(-\sum_iu_iE_i)$, $\frb\cdot
\cO_W=\cO(-\sum_iw_iE_i)$, and $K_{W/\AAA^n}=\sum_ik_iE_i$. Therefore
$$c_0=\min_{i\in J}\left\{\frac{k_i+1}{\alpha u_i+\beta w_i}\right\},$$
where $J$ is the set of those $i$ such that $0\in \mu(E_i)$.

Since each $g_j$ is a general combination of the generators of $\frb$, 
it follows by Bertini's Theorem that $g_j\cdot\cO_W=\cO(-F_j-\sum_iw_iE_i)$,
where $F_1,\ldots, F_N$ are divisors with no common components amongst them or with the $E_i$,
such that
$\sum_iE_i+\sum_jF_j$ has simple normal crossings. In particular, $\mu$ is a log resolution of
$\fra\cdot g$. Since $g\cdot\cO_W=\cO(-\sum_iNw_iE_i-\sum_jF_j)$, and
$$\min\left\{
\min_{i\in J}\frac{k_i+1}{\alpha u_i+\beta w_i},\,\frac{1}{\beta/N}\right\}=c_0$$
(we use the hypothesis that $N/\beta\geq c_0$), it follows that $\lct_0(\fra^{\alpha}\cdot g^{\beta/N})
=c_0$.
\end{proof}

In the following proposition we assume that the ground field $k$ is uncountable.
In this case, we follow the standard terminology by saying that a very general point on an
irreducible algebraic variety over $k$ is a point that lies outside a countable union of
proper subvarieties.

\begin{proposition}\label{prop2} 
With the notation in Proposition~\ref{key},
let $N>qc$ be an integer. If 
$g_1,\ldots,g_N$ are very general linear linear forms with coefficients in $k$,
and $g=g_1\cdot\ldots\cdot g_N$, then $\lct_0(f\cdot g^{q/N})=c$, and there is a divisor $E$ over $X$
computing $\lct_0(f\cdot g^{q/N})$,
with center equal to the closed point.
\end{proposition}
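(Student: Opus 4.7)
The plan is to reduce from the formal power series setting to the polynomial one by truncating $f$, apply Lemma~\ref{lem1}, and use that $k$ is uncountable to reconcile the truncation order with the very general choice of linear forms.

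For each $t\geq 1$, let $f_t\in k[x_1,\ldots,x_n]$ denote the truncation of $f$ of order $t$. By the truncation principle for log canonical thresholds in the formal setting (cf.~\cite{dFM}), for $t$ sufficiently large one has $\lct_0(f_t\cdot\frm^q) = \lct_0(f\cdot\frm^q) = c$, with a computing divisor whose center is the origin, arising from the same combinatorial datum $E_i$ (some $i\in I$) as in Proposition~\ref{key}. Applying Lemma~\ref{lem1} with $\fra=(f_t)$, $\alpha=1$, $\frb=\frm$, $\beta=q$, and $c_0=c$ (the hypothesis $N/q\geq c$ follows from $N>qc$), one obtains a proper Zariski closed subset $V_t$ in the parameter space of $N$-tuples of linear forms such that for $(g_1,\ldots,g_N)\notin V_t$, $\lct_0(f_t\cdot g^{q/N})=c$, where $g=g_1\cdots g_N$. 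Moreover, in the log resolution appearing in the proof of Lemma~\ref{lem1}, the new divisors $F_j$ contribute $N/q>c$ strictly; hence the minimum is achieved only at a divisor from the old log resolution, and at least one such $E_i$ has center equal to the closed point.

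Since $k$ is uncountable, a very general choice of $(g_1,\ldots,g_N)$ lies outside $\bigcup_t V_t$, so $\lct_0(f_t\cdot g^{q/N})=c$ for every sufficiently large $t$, computed by a divisor with center at the origin. A second application of the truncation principle, now to the pair $(X, f\cdot g^{q/N})$, gives $\lct_0(f\cdot g^{q/N}) = \lct_0(f_t\cdot g^{q/N}) = c$ for $t$ large enough depending on $g$, with the same divisor computing it.

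The main obstacle is verifying the truncation principle carefully in this mixed formal/polynomial setting: truncating $f$ at sufficiently high order should preserve both $\lct_0$ and a computing divisor with center at $0$. This is plausible because divisors with center at the origin have $b_i>0$, so $\frm^t$ vanishes along them to order $tb_i$, which dominates $a_i$ once $t$ is large. The delicate point is that the required truncation order depends on the choice of $g$, since the log resolution of $f\cdot g\cdot\frm$ changes with $g$; however, for each fixed $g$ a single value of $t$ suffices, and the "very general" hypothesis on $(g_1,\ldots,g_N)$ absorbs the countable union of bad loci $V_t$ indexed by the truncation order.
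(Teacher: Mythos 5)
Your overall scheme for the numerical part (truncate $f$, apply Lemma~\ref{lem1}, use that the very general choice of $g_1,\ldots,g_N$ works simultaneously for all truncations, then pass back to $f$) is the same as the paper's, but it rests on a claim that the cited truncation principle does not give: you assert that for $t\gg 0$ one has the \emph{exact} equality $\lct_0(f_t\cdot\frm^q)=\lct_0(f\cdot\frm^q)=c$, ``with a computing divisor whose center is the origin, arising from the same $E_i$.'' Prop.~2.5 of \cite{dFM} only gives convergence $c_t:=\lct_0(f_{\leq t}\cdot\frm^q)\to c$, not eventual equality, and certainly not that a computing divisor with center $0$ persists for the truncations. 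The paper is careful on exactly this point: it applies Lemma~\ref{lem1} with $c_0=c_t$ (which may differ from $c$), using $N>qc_t$ for $t\gg 0$, and recovers $\lct_0(f\cdot g^{q/N})=c$ only in the limit. This part of your argument is reparable by the same modification.

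The genuine gap is in the divisor statement, which is the whole point of the proposition. Your final step claims that a second application of the truncation principle to $f\cdot g^{q/N}$ yields $\lct_0(f\cdot g^{q/N})=c$ ``with the same divisor computing it.'' The truncation principle controls only the numerical values; it says nothing about transferring a computing divisor from $f_t\cdot g^{q/N}$ to $f\cdot g^{q/N}$. Moreover the divisor you would extract from Lemma~\ref{lem1} lives on a log resolution of $f_t\cdot g$ and depends on $t$: to argue that $\ord_{E_t}(f)=\ord_{E_t}(f_t)$ you would need $t\cdot\ord_{E_t}(\frm)>\ord_{E_t}(f_t)$, but $\ord_{E_t}(f_t)$ is not bounded in terms of $\ord_{E_t}(\frm)$ uniformly in $t$, so the argument is circular. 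The paper sidesteps all of this with a short direct argument that you are missing: take the fixed divisor $E$ produced by Proposition~\ref{key} (over the formal $X$, computing $\lct_0(f\cdot\frm^q)$ with center the closed point); since $g\in\frm^N$ one has $\ord_E(f\cdot g^{q/N})\geq\ord_E(f\cdot\frm^q)$, whence
$$c=\frac{\ord_E(K_{Y/X})+1}{\ord_E(f\cdot\frm^q)}\geq\frac{\ord_E(K_{Y/X})+1}{\ord_E(f\cdot g^{q/N})}\geq\lct_0(f\cdot g^{q/N})=c,$$
so both inequalities are equalities and this same $E$ computes $\lct_0(f\cdot g^{q/N})$. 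Without an argument of this kind (or a proof of the divisor-persistence claims you assert), your proposal does not establish the existence of a computing divisor with center the closed point.
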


\begin{proof}
For every $\ell$, let $f_{\leq\ell}$ denote the truncation of $f$ of degree $\leq\ell$. 
Let us put $c_{\ell}=\lct_0(f_{\leq \ell}\cdot\frm^q)$. It follows from Proposition~\ref{key} and 
Prop. 2.5 in \cite{dFM} that $\lim_{\ell\to\infty}c_{\ell}=c$. In particular,
if $\ell\gg 0$, then $N>q c_{\ell}$.
 For such $\ell$ we may apply the lemma
to deduce that
$\lct_0(f_{\leq \ell}\cdot g^{q/N})=c_{\ell}$ (note the since we are allowed to take the $g_i$
as very general linear combinations of $x_1,\ldots,x_n$, we can simultaneously 
apply the lemma for all $\ell$ as above).
Another application of Prop. 2.5 in \cite{dFM} now gives $\lct_0(f\cdot g^{q/N})=c$.

Suppose that $E$ is the divisor given by Proposition~\ref{key}, so it computes
$\lct_0(f\cdot\frm^q)$, and its center is equal to the closed point. 
We denote by $\ord_E$ the valuation of the fraction field of $R$ corresponding to $E$.
Since $g\in\frm^N$, we deduce
$\ord_E(f\cdot g^{q/N})\geq\ord_E(f\cdot\frm^q)$. Therefore we have
\begin{equation}\label{eq3}
c=\frac{\ord_E(K_{Y/X})+1}{\ord_E(f\cdot \frm^q)}\geq 
\frac{\ord_E(K_{Y/X})+1}{\ord_E(f\cdot g^{q/N})}\geq c,
\end{equation}
where the second equality follows from the fact that $\lct_0(f\cdot g^{q/N})=c$.
Therefore both inequalities in (\ref{eq3}) are equalities, and we see that $E$ computes
$\lct_0(f\cdot g^{q/N})$.
\end{proof}

\section{The proof of the ACC Conjecture}

Before giving the proof of Theorem~\ref{main}, let us describe the key construction
from \cite{Kol1}. 
For every nonnegative integer $m$, let ${\rm Pol}_{\leq m}$ denote the affine space
$\AAA_k^{N_m}$ (with $N_m={{n+m}\choose {m}}$), such that for every field extension
$K$ of $k$, the $K$-rational points of ${\rm Pol}_{\leq m}$ parametrize the polynomials in
$K[x_1,\ldots,x_n]$ of degree $\leq m$. We have obvious maps
$\pi_m\colon {\rm Pol}_{\leq m}\to {\rm Pol}_{\leq (m-1)}$ that correspond to truncation
of polynomials. 
Given a field extension $K$ of $k$, a formal power series $f\in K\llbracket x_1,\ldots,x_n\rrbracket$
corresponds to a sequence of morphisms ${\rm Spec}\,K\to {\rm Pol}_{\leq m}$ over 
${\rm Spec}\,k$, compatible via the truncation maps. We denote by $t_m(f)$ the corresponding element of ${\rm Pol}_{\leq m}(K)$. 

Suppose now that $(f_q)_q$ is a sequence of formal power series 
in $k\llbracket x_1,\ldots,x_n\rrbracket$, all being nonzero, and of positive order.
We consider sequences of 
irreducible closed subsets $Z_m
\subseteq {\rm Pol}_{\leq m}$ with the following properties:
\begin{enumerate}
\item[i)] For every $m$, there are infinitely many
$q$ such that $t_m(f_q)\in Z_m$.
\item[ii)] $Z_m$ is the Zariski closure of those $t_m(f)\in Z_m$.
\item[iii)] Each truncation morphism $\pi_m$ induces a dominant morphism $Z_m\to Z_{m-1}$.
\end{enumerate}

Such sequences can be constructed by induction.
We start the induction by taking $Z_0={\rm Pol}_{\leq 0}={\rm Spec}(k)$. If $Z_m$ is constructed, 
then we take $Z_{m+1}$ to be a minimal irreducible closet subset of $\pi_{m+1}^{-1}(Z_m)$
with the property that it contains $t_{m+1}(f_q)$ for infinitely many $q$. 
Properties i)-iii) are clear: note that the minimality assumption in the definition of $Z_{m+1}$ implies that the induced morphism
$Z_{m+1}\to Z_{m}$ is dominant.

Suppose that $(Z_m)_m$ is a sequence satisfying i)-iii) above.
 Let $\eta_m$ denote the generic point of 
$Z_m$, so the truncation maps induce embeddings $k(\eta_m)\hookrightarrow k(\eta_{m+1})$.
If $K$ is an algebraically closed field containing $\bigcup_mk(\eta_m)$,
we get corresponding maps ${\rm Spec}(K)\to {\rm Pol}_{\leq m}$ that are compatible via the
truncation morphisms. This corresponds to a formal power series $f\in K\llbracket x_1,\ldots,x_n\rrbracket$
such that $t_m(f)$ gives $\eta_m$.
Of course, this construction is not unique. However, whenever $f$ is obtained 
 from the sequence $(f_q)_q$ via such  a sequence $(Z_m)_m$, we  say that $f$ is \emph{a generic limit} of $(f_q)_q$. A trivial example is when $f_m=h$ for every $m$, in which case 
 each $Z_m$ is a point, and for every field extension $K$ of $k$, a generic limit of this sequence
 is given by the image of $f$ in $K\llbracket x_1,\ldots,x_n\rrbracket$.

A key property of generic limits that follows from construction is that for every $m$, there are infinitely many $q$ such that
$\lct_0(t_m(f))=\lct_0(t_m(f_q))$. It is easy to deduce from this that if $\lim_{q\to\infty}\lct_0(f_q)
=c$, then $\lct_0(f)=c$ (see Thm.~29 in \cite{Kol1} for details).

We start with an easy lemma describing the behavior of generic limits under multiplication.

\begin{lemma}\label{easy1}
Let $(f_q)_q$ and $(h_q)_q$ be two sequences of 
formal power series in $k\llbracket x_1,\ldots,x_n\rrbracket$.
If $f\in K'\llbracket x_1,\ldots,x_n\rrbracket$, and $h\in
K''\llbracket x_1,\ldots,x_n\rrbracket$ are generic limits of $(f_q)_q$, and respectively $(h_q)_q$,
then there is a field extension $K$ of both $K'$ and $K''$, such that $fh\in
K\llbracket x_1,\ldots,x_n\rrbracket$ is a generic limit
of the sequence $(f_qh_q)_q$.
\end{lemma}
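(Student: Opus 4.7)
The plan is to construct a sequence of irreducible closed subsets $(V_m)_{m\geq 0}$ in ${\rm Pol}_{\leq m}$ witnessing $fh$ as a generic limit of $(f_qh_q)_q$, using the natural truncated multiplication maps on the parameter spaces.

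For each $m$ there is a $k$-morphism $\mu_m\colon{\rm Pol}_{\leq m}\times{\rm Pol}_{\leq m}\to{\rm Pol}_{\leq m}$ given on points by $(P,Q)\mapsto (PQ)_{\leq m}$, and these are compatible with truncation in the sense that $\pi_m\circ\mu_m=\mu_{m-1}\circ(\pi_m\times\pi_m)$. Let $(Z_m)_m$ and $(W_m)_m$ be sequences realizing $f$ and $h$ as generic limits, and set $V_m:=\overline{\mu_m(Z_m\times W_m)}$. Each $V_m$ is irreducible as the closure of the image of the irreducible scheme $Z_m\times W_m$, and the map $V_m\to V_{m-1}$ is dominant because $Z_m\times W_m\to Z_{m-1}\times W_{m-1}$ is dominant and $\mu_\bullet$ commutes with $\pi_\bullet$. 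After base-changing to an algebraically closed field $K$ containing both $K'$ and $K''$, the generic point of $V_m$ equals $\mu_m(t_m(f),t_m(h))=t_m(fh)$, so once $(V_m)_m$ is shown to satisfy properties i)--iii) for $(f_qh_q)_q$, the associated generic limit is precisely $fh$.

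What remains is to verify i) and ii): that $V_m$ contains $t_m(f_qh_q)$ for infinitely many $q$, and that it is the Zariski closure of those points. Since $t_m(f_qh_q)=\mu_m(t_m(f_q),t_m(h_q))$, containment is automatic whenever $(t_m(f_q),t_m(h_q))\in Z_m\times W_m$, so the whole question reduces to making the set $\{q:t_m(f_q)\in Z_m\text{ and }t_m(h_q)\in W_m\}$ infinite for every $m$. This is the main obstacle: each of the individual sets $\{q:t_m(f_q)\in Z_m\}$ and $\{q:t_m(h_q)\in W_m\}$ is infinite by the defining property of a generic limit, but there is no a priori reason for the intersection to be infinite.

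To overcome this obstacle I would reorganize the construction and perform it jointly on the paired sequence $((f_q,h_q))_q$ inside ${\rm Pol}_{\leq m}\times{\rm Pol}_{\leq m}$. Running the construction from Section~3 on this paired sequence produces an irreducible closed $P_m\subseteq{\rm Pol}_{\leq m}\times{\rm Pol}_{\leq m}$ containing $(t_m(f_q),t_m(h_q))$ for infinitely many $q$, with each truncation $P_m\to P_{m-1}$ dominant; the Zariski closures of the two projections of $P_m$ furnish admissible sequences for $(f_q)_q$ and $(h_q)_q$ which, after a suitable field extension, recover the generic limits $f$ and $h$. Setting $V_m:=\overline{\mu_m(P_m)}$ now makes i) immediate. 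For ii), if $V_m$ is strictly larger than the closure of those $t_m(f_qh_q)$ lying in it, one shrinks $V_m$ to that closure; the shrunken set is still irreducible since its generic point is the image under $\mu_m$ of the generic point of $P_m$, and dominance of the truncations is preserved, so $fh$ still appears as the generic limit of the resulting refined sequence.
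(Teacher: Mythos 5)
Your first paragraph reproduces the paper's own argument: the paper takes $Z_m:=\overline{\phi_m(Z'_m\times Z''_m)}$ (your $V_m:=\overline{\mu_m(Z_m\times W_m)}$), checks compatibility with the truncation maps, and then embeds the generic points via $k(\eta_m)\hookrightarrow Q(k(\eta'_m)\otimes_k k(\eta''_m))$, taking for $K$ an algebraically closed extension of $Q(K'\otimes_k K'')$ --- note that producing a field containing both $K'$ and $K''$ is itself a (small) step, using that $K'\otimes_kK''$ is a domain because $k$ is algebraically closed, so you should not simply posit such a $K$. The obstacle you single out is real: property i) for the product sequence requires infinitely many indices $q$ for which \emph{simultaneously} $t_m(f_q)\in Z_m$ and $t_m(h_q)\in W_m$, and the paper's proof asserts i)--iii) for $V_m$ without addressing this. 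In the paper's actual applications the simultaneity is automatic (the second factor is either the constant sequence $g^s$, for which every $q$ works, or a power of the same sequence $(f_q)$), which is why the main theorem is unaffected.

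Your proposed repair, however, does not close this gap. Running the construction jointly on the paired sequence gives admissible data $P_m$, and the closures of its projections are admissible sequences for $(f_q)_q$ and $(h_q)_q$; but these define \emph{some} generic limits of the two sequences, not the given $f$ and $h$. Generic limits are far from unique --- they depend on the choice of the minimal irreducible subsets, i.e.\ essentially on a choice of subsequence --- and the lemma fixes $f$ and $h$ in advance, so the sentence ``recover the generic limits $f$ and $h$'' is exactly the unproved step, and it is false in general. Indeed, without some compatibility between the data defining $f$ and $h$ no argument can succeed: take $n=1$, $f_q=x$ for $q$ even and $f_q=x^2$ for $q$ odd, and $h_q=x^2$ for $q$ even and $h_q=x$ for $q$ odd. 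Then $f=h=x$ are generic limits (realized along the even, respectively odd, indices), while $f_qh_q=x^3$ for every $q$, and conditions i)--ii) force the only generic limit of a constant sequence to be that series itself; so $fh=x^2$ is not a generic limit of $(f_qh_q)_q$. The correct way to fix the statement is to require (as the paper implicitly does in its applications) that $f$ and $h$ be taken along compatible index sets --- one factor constant, the same sequence, or $f$ and $h$ produced together by a joint construction such as your $P_m$ --- in which case your $V_m=\overline{\mu_m(P_m)}$ argument does work, but the $f$ and $h$ in the conclusion are then the ones arising from that joint construction rather than arbitrarily prescribed generic limits.
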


\begin{proof}
Suppose that $Z'_m, Z''_m\subseteq {\rm Pol}_{\leq m}$ are sequences  of subsets satisfying i)- iii)
above, with respect to $(f_q)_q$, and respectively $(h_q)_q$. If $\eta'_m$ and $\eta''_m$ are the generic points of respectively $Z'_m$ and $Z''_m$, then $K'$ contains all $k(\eta'_m)$, and similarly,
$K''$ contains all $k(\eta''_m)$. 
For every $m$ we have a morphism $\phi_m\colon {\rm Pol}_{\leq m}\times
{\rm Pol}_{\leq m}
\to {\rm Pol}_{\leq m}$ that corresponds to multiplication, followed
by truncation up to degree $m$. It is clear that we have $\pi_m\circ\phi_m=
\phi_{m-1}\circ(\pi_m,\pi_m)$ for every positive $m$. If we take $Z_m:=\overline{\phi_m(Z'_m
\times Z''_m)}$, then $(Z_m)_m$
is a sequence of irreducible closed sets that satisfies i)-iii) with respect to the sequence
$(f_qh_q)_q$. Moreover, if $\eta_m$ is the generic point of $Z_m$, then we have
embeddings compatible with the maps induced by truncation 
$$k(\eta_m)\hookrightarrow  Q(k(\eta'_m)\otimes_kk(\eta''_m)),$$
where for a domain $S$ we denote by $Q(S)$ its fraction field. 
Since $k$ is algebraically closed, $K'\otimes_kK''$ is a domain. 
If $K$ is an algebraically closed extension of $Q(K'\otimes_kK'')$, then $K$ contains
$\bigcup_mk(\eta_m)$, and it follows from the construction that $fh$ is the generic limit
corresponding to the sequence $(Z_m)_m$, and to the embedding $\bigcup_mk(\eta_m)
\hookrightarrow K$.
\end{proof}

The following is the key result of Koll\'{a}r that we will use. Its proof uses
the difficult finite generation result of \cite{BCHM}.

\begin{proposition}\label{ingred} ${\rm (Prop.\,40}$, \cite{Kol1}${\rm )}$
Let $K\supseteq k$ be a field extension, and suppose that $F\in K\llbracket x_1,\ldots,x_n\rrbracket$
is a formal power series. Suppose that there is a divisor computing $\lct_0(F)$ with center at the closed point. If $Z_m\subseteq {\rm Pol}_{\leq m}$ is the $k$-Zariski closure of $t_m(F)$, then there is 
a positive integer $m$, and an open subset $U_m\subseteq Z_m$ such that for every
power series $G\in K\llbracket x_1,\ldots,x_n\rrbracket$ with $t_m(G)\in U_m$, we have
$\lct_0(F)=\lct_0(G)$.
\end{proposition}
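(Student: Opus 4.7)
The plan is to extract the computing divisor $E$ as a divisor on a single finite-type birational model of $X = \Spec K\llbracket x_1,\ldots,x_n\rrbracket$, and then to translate the desired equality $\lct_0(F) = \lct_0(G)$ into an open condition on the Taylor truncation $t_m(G)$. The key idea is that once the log canonical place $E$ is ``materialized'' on a fixed birational model, the data determining whether $\lct_0(G) = c$ involves only finitely many coefficients of $G$.

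First, applying the finite generation theorem of \cite{BCHM}, one extracts $E$: one produces a projective birational morphism $\pi\colon X' \to X$ whose unique exceptional divisor is (the birational transform of) $E$, with $-E$ being $\pi$-ample. The hypothesis that some divisor with center at $0$ computes $\lct_0(F)$ is precisely what enables this extraction. On $X'$ one writes $F\cdot\cO_{X'} = \cO_{X'}(-aE)\cdot \cJ_F$, with $a = \ord_E(F)$, and denotes by $k$ the coefficient of $E$ in $K_{X'/X}$, so that $c = (k+1)/a$. The fact that $E$ computes $\lct_0(F)$ then translates, via inversion of adjunction, into the log canonicity of the pair $(E,\, c\cdot \cJ_F|_E)$ — an inequality intrinsic to the fixed scheme $E$ and to a readily computable restricted ideal on it.

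Next, I would observe that both $\ord_E(G)$ and the restriction $\cJ_G|_E$ depend on $G$ only through its truncation $t_m(G)$ for $m$ sufficiently large, with $m$ depending only on the model $\pi$ and on $a$. By openness of log canonical singularities in families, the condition ``$(E,\, c\cdot \cJ_G|_E)$ is log canonical'' cuts out a nonempty open subset $U_m \subseteq Z_m$ containing $t_m(F)$, and on $U_m$ one obtains $\lct_0(G)\geq c$. The reverse inequality $\lct_0(G) \leq c$ follows from upper semi-continuity of $\lct_0$ along the irreducible family $Z_m$, whose generic point corresponds to $F$ itself; shrinking $U_m$ if necessary, one obtains the equality $\lct_0(F) = \lct_0(G)$.

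The main obstacle is the extraction step — producing a finite-type projective birational model carrying $E$ as a divisor — which rests on the full strength of the finite generation theorem of \cite{BCHM} (applied in a suitably approximated or formal setting), and is the reason this proposition is not elementary. By contrast, the reformulation via inversion of adjunction and the openness argument on truncation spaces are comparatively routine once such a model is in hand.
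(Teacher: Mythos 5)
The paper does not actually prove this proposition: it is quoted verbatim as Prop.~40 of \cite{Kol1} and used as a black box, the only comment offered being that its proof relies on the finite generation theorem of \cite{BCHM}. So there is no argument in the paper to measure your sketch against; what you have written is a reconstruction along the lines of Koll\'ar's own proof (extraction of the lc place $E$ via \cite{BCHM}, finite determinacy of the relevant data on the extracted model, openness on the truncation space), and in outline that is the right route.

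As a proof sketch, however, several steps are stated too loosely to stand as written. First, $\ord_E(G)$ and $\cJ_G|_E$ do \emph{not} depend only on $t_m(G)$ for arbitrary $G$; what is true is that, because the center of $E$ is the closed point, $b:=\ord_E(\frm)>0$, so for $m\gg 0$ and for those $G$ with $\ord_E(t_m(G))\leq a$ one has $\ord_E(G)=\ord_E(t_m(G))$ and $\cJ_G|_E$ determined by $t_m(G)$; one must then check that this locus is open in $Z_m$ and contains its generic point. (Relatedly, the hypothesis that the computing divisor has center at the closed point is not what ``enables the extraction''---\cite{BCHM} extracts any lc place---it is what makes $E$ proper over the residue field and gives $b>0$, which is where it is really used.) Second, the reverse inequality $\lct_0(G)\leq c$ should not be deduced from ``semicontinuity of $\lct_0$ along $Z_m$'': points of $Z_m$ parametrize the truncations $t_m(G)$, not the power series $G$, and in general $\lct_0(G)\neq\lct_0(t_m(G))$, so semicontinuity along $Z_m$ says nothing directly about $G$. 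The correct (and easier) argument is that once $\ord_E(G)=a$, the divisor $E$ itself forces $\lct_0(G)\leq (k_E+1)/a=c$. Third, the adjunction step needs the normality of $E$ and the different (the pair on $E$ is of the form $(E,{\rm Diff}+c\,\cJ_G|_E)$), together with an inversion-of-adjunction statement in the lc setting, and the application of \cite{BCHM} over ${\rm Spec}\,K\llbracket x_1,\ldots,x_n\rrbracket$ (not of finite type over a field) requires justification; these are precisely the points Koll\'ar has to address, and they cannot be dismissed as routine.
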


Note that if $F$ is a generic limit of a sequence $(f_q)_q$ of power series in $k\llbracket
x_1,\ldots,x_n\rrbracket$, then the sets $Z_m$ in the above statement are the same as the sets
that come up in the definition of $F$ as a generic limit.
We can now prove our main result.

\begin{proof}[Proof of Theorem~\ref{main}]
We may assume that $k$ is uncountable, and
it is enough to show that the set
$${\mathcal T}'_n=\{\lct_0(f)\mid f\in k\llbracket x_1,\ldots,x_n\rrbracket,\, f\neq 0, \,\ord(f)\geq 1\}$$
contains no strictly increasing infinite sequences.  
This follows since we can extend scalars to an uncountable algebraically closed field, and we can
complete at the given point $p\in X$ (in fact, 
it is shown in \cite{dFM} that ${\mathcal T}_n={\mathcal T}'_n$,
and that this set is independent of the algebraically closed field $k$; however, we do not need this fact).

Let us suppose that $(f_q)_q$ is a sequence of formal power series in $k\llbracket
x_1,\ldots,x_n\rrbracket$, nonzero and of positive order, such that 
if we put $c_q=\lct_0(f_q)$, then
the corresponding sequence
$(c_q)_q$ is strictly increasing. We put $c:=\lim_{q\to\infty}c_q$. 
Let $f\in K\llbracket x_1,\ldots,x_n\rrbracket$ be a generic limit of the sequence $(f_q)_q$,
with $K$ an algebraically closed extension of $k$. As mentioned above, we have
$\lct_0(f)=c$.

If $\lct_0(f)$ is computed by some divisor $E$ with center equal to the closed point, then we are done.
Indeed, we apply Proposition~\ref{ingred} above to $F=f$ to get $m$ and an open subset 
$U_m\subseteq Z_m$ with the property that whenever $G\in K\llbracket x_1,\ldots,x_n\rrbracket$
satisfies $t_m(G)\in U_m$, we have $\lct_0(f)=\lct_0(G)$. On the other hand, by assumption 
$Z_m$ is the Zariski closure of those $t_m(f_q)\in Z_m$.
Therefore we can find $q$ with $t_m(f_q)\in U_m$, so that $c_q=c$, a contradiction.

Suppose now that $\lct_0(f)$ is not computed by any divisor with center equal to the closed point. 
In this case Propositions~\ref{key} and \ref{prop2} imply that we can find a positive rational number
$q$, a positive integer $N$  and $g\in k\llbracket x_1,\ldots,x_n\rrbracket$ 
such that $\lct_0(f\cdot g^{q/N})=c$, and there is a divisor $E$ computing $\lct_0(f\cdot g^{q/N})$,
and having center equal to the closed point. Let us write $q/N=s/r$, for positive integers $r$ and $s$.
It is clear that $\lct_0(f^rg^s)=c/r$, and the same divisor $E$ computes this log canonical threshold. 

On the other hand, by Lemma~\ref{easy1} the power series $f^rg^s$
is a generic limit of the sequence $(f_q^rg^s)_q$. Since $\lct_0(f^rg^s)$ is computed by a divisor
with center equal to the closed point, using Proposition~\ref{ingred} for $F=f^rg^s$, we deduce that
for some  $q$ we have $\lct_0(f_q^rg^s)=c/r$. Note now that $\lct_0(f_q^rg^s)
\leq \lct_0(f_q^r)=c_q/r$, so we have a contradiction. This completes the proof of the theorem.
\end{proof}

\begin{remark}\label{ideals}
While we stated Theorem~\ref{main} only for log canonical thresholds of principal ideals,
it is straightforward to extend the statement to arbitrary ideals. More precisely, the theorem
implies that for every $n\geq 1$, the set of all log canonical thresholds $\lct_p(X,\fra)$
satisfies the Ascending Chain Condition, when $X$ 
varies over the smooth $n$-dimensional varieties, and $\fra$ over the nonzero ideals on $X$ containing
$p$ in their support. Indeed, we may assume that $X$ is affine, in which case if $f_i$ is a general linear combination
of the generators of $\fra$, for $1\leq i\leq n$, and if $f=f_1\cdot\ldots\cdot f_n$, then
$\frac{1}{n}\cdot\lct_p(X,\fra)=\lct_p(X,f)$ (note that $\lct_p(X,\fra)\leq n$, and apply for example
Lemma~\ref{lem1}). Therefore each such $\lct_p(X,\fra)$ lies in $n\cdot{\mathcal T}_n$, and this set satisfies the Ascending Chain Condition by Theorem~\ref{main}.
\end{remark}

The following proposition allows us to reduce log canonical thresholds on varieties with quotient singularities to log canonical thresholds on smooth varieties. We say that a variety $X$ has 
quotient singularities at $p\in X$ if there is a smooth variety $U$, a finite group $G$ acting on
$U$, and a point $q\in V=U/G$ such that the two completions 
$\widehat{\cO_{X,p}}$ and $\widehat{\cO_{V,q}}$ are isomorphic as $k$-algebras. 
One can assume that $U$ is an affine space and that the action of $G$
is linear. Furthermore, one can assume that $G$ acts with no fixed points in codimension one
(otherwise, we may replace $G$ by $G/H$ and $U$ by $U/H$, where $H$ is generated by all pseudoreflections in $G$, and by Chevalley's theorem \cite{Chevalley}, the quotient 
$U/H$ is again an affine space). 
Using Artin's approximation results (see Corollary~2.6 in \cite{Artin}), it follows that 
there is an \'{e}tale neighborhood of $p$ that is also an \'{e}tale neighborhood of $q$.
In other words, there is a variety $W$, a point $r\in W$, and \'{e}tale maps
$\phi\colon W\to X$ and $\psi\colon W\to V$, such that $p=\phi(r)$ and $q=\psi(r)$. 
After replacing $\phi$ by the compositon
$$W\times_VU\to W\overset{\phi}\to X,$$
we may assume that in fact we have an \'{e}tale map $U/G\to X$ containing $p$ in its image,
with $U$ smooth, 
and such that $G$ acts on $U$ without fixed points in codimension one. 
This reinterpretation of the definition of quotient singularities seems to be well-known
to experts, but we could not find an explicit reference in the literature.

We say that $X$ has quotient singularities if it has quotient singularities at every point.

\begin{proposition}\label{reduction_quotient}
Let $X$ be a variety with quotient singularities, and let $\fra$ be a proper nonzero ideal on $X$. 
For every $p$ in the zero-locus $V(\fra)$ of $\fra$, there is a smooth variety $U$, a nonzero ideal
$\frb$ on $U$, and a point $q$ in $V(\frb)$ such that $\lct_p(X,\fra)=\lct_q(U,\frb)$.
\end{proposition}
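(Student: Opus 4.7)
The plan is to exploit the étale-local description of quotient singularities set up in the paragraph preceding the statement: there exist a smooth variety $U$, a finite group $G$ acting on $U$ without pseudoreflections, and an étale map $\phi\colon U/G\to X$ whose image contains $p$. Since log canonical thresholds are étale-local invariants, I may replace $(X,\fra,p)$ by $(U/G,\ \phi^{-1}\fra\cdot\cO_{U/G},\ p')$ for some $p'\in\phi^{-1}(p)$, and so assume from the start that $X=U/G$. Let $\pi\colon U\to X$ be the quotient map, pick $q\in\pi^{-1}(p)$, and set $\frb:=\pi^{-1}\fra\cdot\cO_U$. The proposition is reduced to the identity $\lct_p(X,\fra)=\lct_q(U,\frb)$.

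The key geometric input is that the no-pseudoreflections hypothesis makes $\pi$ étale in codimension one, so $X$ is $\QQ$-Gorenstein with $\pi^*K_X=K_U$ as Weil divisors (the ramification divisor vanishes). I would then compute both sides via the valuation-theoretic characterization
\[
\lct_p(X,\fra)=\inf_F\frac{A_X(F)}{\ord_F(\fra)},
\]
where $F$ ranges over divisorial valuations on $X$ with center containing $p$ and $A_X(F)$ is the log discrepancy defined through $\pi^*K_X$; the analogous formula holds for $U$. Every such $F$ extends to a divisorial valuation $F'$ on $U$ with center above $q$ (after, if necessary, a $G$-translate), and conversely every $F'$ with center meeting $\pi^{-1}(p)$ projects to some such $F$. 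For a pair $(F,F')$ with ramification index $e$, the equality $\frb=\pi^{-1}\fra\cdot\cO_U$ gives $\ord_{F'}(\frb)=e\cdot\ord_F(\fra)$, and the log-discrepancy identity
\[
A_U(F')=e\cdot A_X(F)
\]
holds. Hence the two ratios coincide termwise, and taking infima yields the equality.

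The only technical point that requires care is the discrepancy identity $A_U(F')=e\cdot A_X(F)$. I would prove it on compatible proper birational models $Y'\to U$ and $Y\to X$ fitting into a finite cover $\mu\colon Y'\to Y$: combine the tame Riemann-Hurwitz formula $K_{Y'}=\mu^*K_Y+(e-1)F'+\cdots$ with the equality $K_U=\pi^*K_X$, so that the pullbacks of $K_X$ cancel from $K_{Y'/U}$ and $\mu^*K_{Y/X}$. This is the genuinely crepant-in-codimension-one content of the argument; everything else is routine divisorial bookkeeping.
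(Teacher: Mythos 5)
Your proposal is correct, and its skeleton (reduce étale-locally to $X=U/G$ with $G$ acting without pseudoreflections, pull back along the quotient map $\pi\colon U\to X$, which is étale in codimension one so that $K_U=\pi^*K_X$, and set $\frb=\fra\cdot\cO_U$) is exactly the paper's. Where you diverge is the final comparison: the paper quotes Proposition~5.20 of Koll\'ar--Mori as a black box, obtaining that $(X,\fra^c)$ is log canonical if and only if $(U,\frb^c)$ is, and then has to remark that this result is stated only for locally principal ideals, reducing to that case by taking products of general linear combinations of local generators; you instead compute $\lct_p$ and $\lct_q$ directly from the valuation-theoretic formula, matching divisorial valuations $F$ on $X$ with extensions $F'$ on $U$ (using $G$-transitivity on fibers to place the center over $q$, and the fact that finite morphisms send the center of $F'$ onto the center of $F$) and invoking $\ord_{F'}(\frb)=e\cdot\ord_F(\fra)$ together with the log-discrepancy identity $A_U(F')=e\cdot A_X(F)$, proved by Riemann--Hurwitz on compatible models. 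In effect you reprove the cited Koll\'ar--Mori statement rather than cite it; what this buys is that arbitrary ideals are handled directly, with no detour through the locally principal case, at the cost of carrying out the crepant-cover discrepancy computation yourself. Two small points you assert without comment but should flag as (standard) inputs: $K_X$ is $\QQ$-Cartier (quotient singularities are $\QQ$-Gorenstein, with index dividing $|G|$), which is needed for $\pi^*K_X$ and for $A_X(F)$ to make sense, and the fact that restrictions and extensions of divisorial valuations along the finite extension $K(X)\subseteq K(U)$ remain divisorial, which underlies your correspondence of valuations.
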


\begin{proof}
Let us choose an \'{e}tale map $\phi\colon U/G\to X$ with $p\in {\rm Im}(\phi)$, where 
$U$ is a smooth variety, and $G$ is a finite group acting on $U$ 
without fixed points in codimension one. 
Let $\widetilde{\phi}\colon U\to X$ denote the composition of $\phi$ with the quotient map.
Since $G$ acts without fixed points in codimension one, $\widetilde{\phi}$ is
\'{e}tale in codimension one, hence $K_U=\widetilde{\phi}^*(K_X)$. It follows from 
Proposition~5.20 in \cite{KM} that if $\frb=\fra\cdot\cO_U$, then the pair $(X,\fra^q)$ is log canonical if and only if the pair
$(U,\frb^q)$ is log canonical (actually the result in \emph{loc. cit.} only covers the case when $\fra$
is locally principal, but one can easily reduce to this case, by taking a suitable product of general
linear combinations of the local generators of $\fra$). We conclude that there is a point $q\in
V(\frb)$ such that $\lct_p(X,\fra)=\lct_q(U,\frb)$.
\end{proof}

\begin{remark}\label{usual_definition}
At least over the complex numbers, one usually says that $X$ has quotient singularities at $p$
if the germ of analytic space $(X,x)$ is isomorphic to $M/G$, where $M$ is a complex manifold,
and $G$ is a finite group acting on $M$. It is not hard to check that in this context this definition is equivalent with the one we gave above.
\end{remark}

Combining Proposition~\ref{reduction_quotient}
 and Theorem~\ref{main} (see also Remark~\ref{ideals}), we deduce
Shokurov's ACC Conjecture on varieties with quotient singularities. 

\begin{corollary}\label{quotient}
For every $n\geq 1$, the set 
$$\{\lct_p(X,\fra)\mid X\,\text{has quotient singularities},\,\dim(X)=n,(0)\neq\fra,\,p\in V(\fra)\}$$
satisfies the Ascending Chain Condition.
\end{corollary}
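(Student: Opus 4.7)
The plan is to reduce the quotient-singularity case directly to the smooth case that has already been handled. Given any triple $(X,\fra,p)$ with $X$ of dimension $n$, having quotient singularities, and $p\in V(\fra)$, I would invoke Proposition~\ref{reduction_quotient} to produce a smooth variety $U$, a nonzero ideal $\frb$ on $U$, and a point $q\in V(\frb)$ with $\lct_p(X,\fra)=\lct_q(U,\frb)$. The key point to verify here is that the variety $U$ produced in Proposition~\ref{reduction_quotient} has the same dimension as $X$: this is immediate from the construction, since $U$ is étale-locally a cover of $X$ via $U\to U/G\to X$, and both the finite quotient map and the étale map preserve dimension.

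With that reduction in hand, I would denote by $\cS_n$ the set appearing in the corollary and by $\cS'_n$ the set of all log canonical thresholds $\lct_q(U,\frb)$ with $U$ smooth of dimension $n$, $\frb$ a nonzero ideal, and $q\in V(\frb)$. The previous step gives an inclusion $\cS_n\subseteq\cS'_n$. By Remark~\ref{ideals}, the set $\cS'_n$ is contained in $n\cdot\mathcal T_n$, and $n\cdot\mathcal T_n$ satisfies ACC as a consequence of Theorem~\ref{main}. Since any subset of an ACC set still satisfies ACC, we conclude that $\cS_n$ satisfies the Ascending Chain Condition.

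There is no serious obstacle here; the only thing to be careful about is to invoke the \emph{ideal} form of the main theorem (Remark~\ref{ideals}) rather than the principal-ideal form, since the corollary is stated for arbitrary nonzero ideals $\fra$. If one wished instead to prove a principal-ideal version of the corollary directly, one would apply Proposition~\ref{reduction_quotient} to an ideal $\fra=(f)$ and observe that $\frb=f\cdot\cO_U$ is again principal, thus landing directly in $\mathcal T_n$ via Theorem~\ref{main}; but the argument above handles both cases uniformly.
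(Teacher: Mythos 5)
Your argument is exactly the paper's: combine Proposition~\ref{reduction_quotient} (noting that $U$ has the same dimension as $X$, since $U\to U/G\to X$ is finite followed by \'{e}tale) with the ideal version of Theorem~\ref{main} from Remark~\ref{ideals}, and use that a subset of a set with ACC has ACC. This is correct and matches the paper's proof.
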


\providecommand{\bysame}{\leavevmode \hbox \o3em
{\hrulefill}\thinspace}

\end{document}